\def\IE{{\mathbb E}}
\def\IP{{\mathbb P}}
\def\IR{{\mathbb R}}
\def\IZ{{\mathbb Z}}
\def\n{\noindent}
\def\dsl{\textstyle\sum\limits}
\def\dis{\displaystyle}
\def\o{\omega}
\def\fr{\mbox{\footnotesize $\dis\frac{1}{2}$}}
\def\f{\footnotesize}
\def\r{\rightarrow}
\def\point{{\mbox{\large $.$}}}
\def\wh{\widehat}
\def\wt{\widetilde}
\def\cA{{\cal A}}
\def\cT{{\cal T}}
\def\cI{{\cal I}}
\newtheorem{theorem}{Theorem}[section]
\newtheorem{remark}[theorem]{Remark}
\begin{document}

\baselineskip14pt
\noindent

\begin{center}
{\bf AN ISOMORPHISM THEOREM FOR RANDOM INTERLACEMENTS}
\end{center}

\vspace{0.5cm}

\begin{center}
Alain-Sol Sznitman$^*$
\end{center}

\bigskip
\begin{center}
Preliminary Draft
\end{center}

\bigskip
\begin{abstract}
We consider continuous-time random interlacements on a transient weighted graph. We prove an identity in law relating the field of occupation times of random interlacements at level $u$ to the Gaussian free field on the weighted graph. This identity is closely linked to the generalized second Ray-Knight theorem of \cite{EisKasMarRosShi00},~\cite{MarcRose06}, and uniquely determines the law of occupation times of random interlacements at level $u$.
\end{abstract}

\vspace{9cm}

\noindent
Departement Mathematik  \hfill  November 2011
\\
ETH-Zentrum\\
CH-8092 Z\"urich\\
Switzerland

\vfill 
\n
$\overline{~~~~~~~~~~~~~~~~~~~~~~~~~~~~~~~~~~}$

\n
{\footnotesize{$^*$ This research was supported in part by the grant ERC-2009-AdG  245728-RWPERCRI}}

\newpage

\thispagestyle{empty}
~

\newpage
\setcounter{page}{1}

\setcounter{section}{-1}
\section{Introduction}

In this note we consider continuous-time random interlacements on a transient weighted graph $E$. We prove an identity in law, which relates the field of occupation times of random interlacements at level $u$ to the Gaussian free field on $E$. The identity can be viewed as a kind of generalized second Ray-Knight theorem, see \cite{EisKasMarRosShi00}, \cite{MarcRose06}, and characterizes the law of the field of occupation times of random interlacements at level $u$.

\medskip
We now describe our results and refer to Section 1 for details. We consider a countable, locally finite, connected graph, with vertex set $E$, endowed with non-negative symmetric weights $c_{x,y} = c_{y,x}$, $x,y \in E$, which are positive exactly when $x,y$ are distinct and $\{x,y\}$ is an edge of the graph. We assume that the induced discrete-time random walk on $E$ is transient. Its transition probability is defined by 
\begin{equation}\label{0.1}
p_{x,y} = \dis\frac{c_{x,y}}{\lambda_x}, \;\;  \mbox{where $\lambda_x = \dsl_{z \in E} c_{x,y}$, for $x,y \in E$}.
\end{equation}

\n
In essence, continuous-time random interlacements consist of a Poisson point process on a certain space of doubly infinite $E$-valued trajectories marked by their duration at each step, modulo time-shift. A non-negative parameter $u$ plays the role of a multiplicative factor of the intensity of this Poisson point process, which is defined on a suitable canonical space $(\Omega, \cA, \IP)$. The field of occupation times of random interlacements at level $u$ is then defined for $x \in E$, $u \ge 0$, $\o \in \Omega$, by (see (\ref{1.8}) for the precise expression)
\begin{equation}\label{0.2}
\begin{split}
L_{x,u}(\o) = \lambda_x^{-1} \times &\; \mbox{the total duration spent at $x$ by the trajectories modulo}\\
&\;\mbox{time-shift with label at most $u$ in the cloud $\omega$.}
\end{split}
\end{equation}

\n
The Gaussian free field on $E$ is the other ingredient of our isomorphism theorem. Its canonical law $P^G$ on $\IR^E$ is such that
\begin{equation}\label{0.3}
\begin{array}{l}
\mbox{Under $P^G$, the canonical field $\varphi_x$, $x \in E$, is a centered Gaussian field with}\\
\mbox{covariance $E^{P^G} [\varphi_x \varphi_y] = g(x,y)$, for $x,y \in E$,}
\end{array}
\end{equation}

\n
where $g(\cdot,\cdot)$ stands for the Green function attached to the walk on $E$, see (\ref{1.3}). The main result of this note is the next theorem:

\begin{theorem}\label{theo0.1}
For each $u \ge 0$,
\begin{equation}\label{0.4}
\begin{array}{l}
\big(L_{x,u} + \fr  \;\varphi^2_x\big)_{x \in E} \;\mbox{under $\IP \otimes P^G$, has the same law as}
\\[1ex]
\big(\fr \;(\varphi_x + \sqrt{2u})^2\big)_{x \in E} \;\mbox{under $P^G$}.
\end{array}
\end{equation}
\end{theorem}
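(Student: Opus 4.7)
The plan is to reduce Theorem~\ref{theo0.1} to the classical generalized second Ray-Knight theorem of \cite{EisKasMarRosShi00}, \cite{MarcRose06} on a suitable finite weighted graph. Since both sides of (\ref{0.4}) define probability laws on $\IR^E$, by Kolmogorov's extension theorem it suffices to check equality of the finite-dimensional marginals on an arbitrary finite subset $K \subset E$, which I now fix.

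To this end, I would construct an auxiliary finite weighted graph $\widetilde{E}_K = K \cup \{\Delta\}$, where $\Delta$ is a ``ghost'' vertex playing the role of $E \setminus K$, i.e.\ of the point at infinity. The weights $\widetilde{c}$ on $\widetilde{E}_K$ are to be chosen so that the continuous-time walk on $\widetilde{E}_K$ absorbed at $\Delta$ has Green function on $K \times K$ equal to $g|_{K \times K}$; concretely, the jump rates from $x \in K$ to $\Delta$ are tuned so that the trace on $K$ of the walk on $\widetilde{E}_K$ coincides in law with the trace on $K$ of the walk on $E$. A direct consequence is that the Gaussian free field $\widetilde\varphi$ on $\widetilde{E}_K$ pinned to $0$ at $\Delta$ restricts on $K$ to a field with the law of $(\varphi_x)_{x \in K}$ under $P^G$.

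The crucial step is then to identify $(L_{x,u})_{x \in K}$ with the local times $(L^x_{\tau^\Delta_u})_{x \in K}$ of the walk on $\widetilde{E}_K$ started at $\Delta$ and run up to the inverse local time $\tau^\Delta_u$ at $\Delta$ at level $u$. Both quantities can be written as sums of local times along a Poisson point process of excursions in $K$: the trajectories of random interlacements with label at most $u$ which meet $K$ on one side, and the excursions from $\Delta$ of the auxiliary walk up to $\tau^\Delta_u$ on the other. Matching the two requires comparing their intensities (both should turn out to be proportional to $u \cdot e_K$, the equilibrium measure of $K$) and verifying that the ``within-$K$'' laws of a single excursion agree. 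I expect the main technical obstacle to lie precisely here, in the bookkeeping of Poisson intensities against the shift-invariance of random interlacements and the normalization by $\lambda_x$ built into (\ref{0.2}).

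With this identification established, the classical generalized second Ray-Knight theorem applied to $\widetilde{E}_K$ with distinguished vertex $\Delta$ yields that $(L^x_{\tau^\Delta_u} + \frac{1}{2}\widetilde\varphi_x^2)_{x \in K}$ has the same law as $(\frac{1}{2}(\widetilde\varphi_x + \sqrt{2u})^2)_{x \in K}$. Combined with the two identifications above, this gives (\ref{0.4}) restricted to $K$; since $K \subset E$ is arbitrary, the theorem follows.
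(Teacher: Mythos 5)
Your proposal is correct in outline but takes a genuinely different route from the paper. The paper fixes an increasing exhaustion $U_n \uparrow E$, collapses $E \setminus U_n$ onto a single vertex $x_*$ to obtain finite graphs $E_n$, proves an approximation theorem (Theorem~\ref{theo2.1}) stating that the local-time field $(\ell^{n,x}_{\tau^n_u})_{x\in E}$ converges in distribution to $(L_{x,u})_{x\in E}$, applies the classical second Ray-Knight theorem on each $E_n$, and then lets $n\to\infty$ using $g_{U_n}\uparrow g$ to pass from $P^{G,U_n}$ to $P^G$. You instead dispense with the limit altogether: you collapse $E\setminus K$ directly onto a ghost vertex $\Delta$, building the ``trace graph'' $\widetilde{E}_K=K\cup\{\Delta\}$ whose walk is the trace on $K$ of the original walk, killed when it leaves $K$ for good. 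The technical obstacle you flag does go through cleanly: the reversibility of the trace chain gives $\widetilde{c}_{\Delta,y}=\lambda_y P_y[\widetilde{H}_K=\infty]=e_K(y)$, so the excursion intensity from $\Delta$ up to $\tau^\Delta_u$ is exactly $u\cdot e_K$ (matching the interlacement intensity $\wh{Q}_K$ restricted to its forward part), while the per-excursion law from $y$ to $\Delta$ is by construction the trace-on-$K$ law of the walk from $y$, which is precisely what contributes to $L_{\cdot,u}$ on $K$ (the backward part of an interlacement trajectory avoids $K$ by (\ref{1.7})(ii)); moreover $\widetilde\lambda_y=\lambda_y$, so the Green function of the $\widetilde{E}_K$-walk killed at $\Delta$ equals $g$ on $K\times K$, giving the GFF identification. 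Your identity $(L_{x,u})_{x\in K}\stackrel{d}{=}(\ell^{x}_{\tau^\Delta_u})_{x\in K}$ is therefore exact rather than a limit, which is a genuine economy. Two caveats: the trace construction generically produces self-loop weights $\widetilde{c}_{x,x}>0$, which fall outside the class of weighted graphs fixed in (\ref{0.1}) but are harmless for the Ray-Knight theorem and the GFF (self-loops do not change the generator or the Green function); and what the paper's detour through $U_n$ buys is Theorem~\ref{theo2.1} as a by-product of independent interest, together with a more elementary excursion analysis, since the $E_n$-walk and the $E$-walk literally coincide before $T_{U_n}$, whereas your route requires the (standard, but not spelled out) computation of the trace-chain weights.
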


This theorem provides for each $u$ an identity in law very much in the spirit of the so-called generalized second Ray-Knight theorems, see Theorem 1.1 of \cite{EisKasMarRosShi00} or Theorem 8.2.2 of \cite{MarcRose06}. Remarkably, although we are in a transient set-up, (\ref{0.4}) corresponds to the recurrent case in the context of generalized Ray-Knight theorems. Let us underline that (\ref{0.4}) uniquely determines the law of $(L_{x,u})_{x \in E}$ under $\IP$, as the consideration of Laplace transforms readily shows. We also refer to Remark \ref{rem3.1} for a variation of (\ref{0.4}).

\medskip
The proof of Theorem \ref{theo0.1} involves an approximation argument of the law of $(L_{x,u})_{x \in E}$ stated in Theorem \ref{theo2.1}, which is of independent interest. This approximation has a similar flavor to what appears at the end of Section 4.5 of \cite{Szni11d}, when giving a precise interpretation of random interlacements as ``loops going through infinity'', see also \cite{Leja11}, p.~85. The combination of Theorem \ref{theo2.1} and the generalized second Ray-Knight theorem readily yields Theorem \ref{theo0.1}.  As an application of Theorem \ref{theo0.1} we give a new proof of Theorem 5.1 of \cite{Szni11c} concerning the large $u$ behavior of $(L_{x,u})_{x \in E}$, see Theorem \ref{theo4.1}.

\medskip
We now explain how this note is organized.

\medskip
In Section 1, we provide precise definitions and recall useful facts. Section 2 develops the approximation procedure for $(L_{x,u})_{x \in E}$. We give two proofs of the main Theorem \ref{theo2.1}, and an extension appears in Remark \ref{rem2.2}. The short Section 3 contains the proof of Theorem \ref{theo0.1}, and a variation of (\ref{0.4}) in Remark \ref{rem3.1}. In Section 4, we present an application to the study of the large $u$ behavior of $(L_{x,u})_{x \in E}$, see Theorem \ref{theo4.1}.

\section{Notation and useful results}
\setcounter{equation}{0}

In this section we provide additional notation and recall some definitions and useful facts related to random walks, potential theory, and continuous-time interlacements.

\medskip
We consider the spaces $\wh{W}_+$ and $\wh{W}$ of infinite, and doubly infinite, $E \times (0,\infty)$-valued sequences, such that the $E$-valued sequences form an infinite, respectively doubly-infinite, nearest-neighbor trajectory spending finite time in any finite subset of $E$, and such that the $(0,\infty)$-valued components have an infinite sum in the case of $\wh{W}_+$, and infinite ``forward'' and ``backward'' sums, when restricted to positive and negative indices, in the case of $\wh{W}$.

\medskip
We write $Z_n, \sigma_n$, with $n \ge 0$, or $n \in \IZ$, for the respective $E$- and $(0,\infty)$-valued coordinates on $\wh{W}_+$ and $\wh{W}$. We denote by $P_x$, $x \in E$, the law on $\wh{W}_+$, endowed with its canonical $\sigma$-algebra, under which $Z_n, n \ge 0$, is distributed as simple random walk starting at $x$, and $\sigma_n$, $n \ge 0$, are i.i.d. exponential variables with parameter $1$, independent from the $Z_n$, $n \ge 0$. We denote by $E_x$ the corresponding expectation. Further, when $\rho$ is a measure on $E$, we write $P_\rho$ for the measure $\sum_{x \in E} \rho(x) P_x$, and $E_\rho$ for the corresponding expectation. 

\medskip
We denote by $X_t$, $t \ge 0$, the continuous-time random walk on $E$, with constant jump rate $1$, defined for $t \ge 0$, $\wh{w} \in \wh{W}_+$, by
\begin{equation}\label{1.1}
X_t(\wh{w}) = Z_k(\wh{w}), \; \mbox{when} \; \sigma_0(\wh{w}) + \dots + \sigma_{k-1}(\wh{w}) \le t < \sigma_0 (\wh{w}) + \dots + \sigma_k (\wh{w})
\end{equation}

\medskip\n
(by convention the term bounding $t$ from below vanishes when $k = 0$).

\medskip
Given $U \subseteq E$, we write $H_U = \inf\{t \ge 0; X_t \in U\}$, $\wt{H}_U = \inf\{t > 0; X_t \in U$, and for some $s \in (0,t)$, $X_s \not= X_0\}$, and $T_U = \inf\{ t \ge 0; X_t \notin U\}$, for the entrance time in $U$, the hitting time of $U$, and the exit time from $U$. We denote by $g_U(\cdot,\cdot)$ the Green function of the walk killed when exiting $U$
\begin{equation}\label{1.2}
g_U(x,y) = \mbox{\f $\dis\frac{1}{\lambda_y}$} \;E_x \Big[\dis\int^{T_U}_0 1\{X_s = y\} ds\Big], \;\mbox{for $x,y \in E$}.
\end{equation}

\medskip\n
The function $g_U(\cdot,\cdot)$ is known to be symmetric and finite (due to the transience assumption we have made). When $U = E$, no killing takes place (i.e.~$T_U = \infty$), and we simply write
\begin{equation}\label{1.3}
g(x,y) = g_{U = E} (x,y), \; \mbox{for $x,y \in E$},
\end{equation}
for the Green function.

\medskip
Given a finite subset $K$ of $U$, the equilibrium measure and capacity of $K$ relative to $U$ are defined by
\begin{align}
& e_{K,U} (x) = P_x [\wt{H}_K > T_U] \,\lambda_x \,1_K(x), \; \mbox{for $x \in E$}, \label{1.4}
\\[1ex]
&{\rm cap}_U(K) = \dsl_{x \in E} e_{K,U} (x). \label{1.5}
\end{align}

\n
When $U = E$, we simply drop $U$ from the notation, and refer to $e_K$ and ${\rm cap}(K)$, as the equilibrium measure and the capacity of $K$. Further, the probability to enter $K$ before exiting $U$ can be expressed as 
\begin{equation}\label{1.6}
P_x[H_K <T_U] = \dsl_{x \in E} g_U (x,y) \,e_{K,U}(y), \;\mbox{for $x \in E$}.
\end{equation}

\n
We now turn to the description of continuous-time random interlacements on the transient weighted graph $E$. We write $\wh{W}^*$ for the space $\wh{W}$ (introduced at the beginning of this section), modulo time-shift, i.e.~$\wh{W}^* = W/\sim$, where for $\wh{w}$, $\wh{w}' \in \wh{W}$, $\wh{w} \sim \wh{w}'$ means that $\wh{w}(\cdot) = \wh{w}' ( \cdot + k)$ for some $k \in \IZ$. We denote by $\pi^*$: $\wh{W} \r \wh{W}^*$ the canonical map, and endow $\wh{W}^*$ with the $\sigma$-algebra consisting of sets with inverse image under $\pi^*$ belonging to the canonical $\sigma$-algebra of $\wh{W}$.

\medskip
The continuous-time interlacement point process is a Poisson point process on the space $\wh{W}^* \times \IR_+$. Its intensity measure has the form $\nu(d\wh{w}^*)du$, where $\wh{\nu}$ is the $\sigma$-finite measure on $\wh{W}^*$ such that for any finite subset $K$ of $E$, the restriction of $\wh{\nu}$ to the subset of $\wh{W}^*$ consisting of those $\wh{w}^*$ for which the $E$-valued trajectory modulo time-shift enters $K$, is equal to $\pi^* \circ \wh{Q}_K$, the image of $\wh{Q}_K$ under $\pi^*$, where $\wh{Q}_K$ is the finite measure on $\wh{W}$ specified by
\begin{equation}\label{1.7}
\begin{array}{rl}
{\rm i)} & \wh{Q}_K(Z_0 = x) = e_K(x), \;\mbox{for $x \in E$},
\\[1ex]
{\rm ii)} & \mbox{when $e_K(x) > 0$, conditionally on $Z_0 = x$, $(Z_n)_{n \ge 0}$, $(Z_{-n})_{n \ge 0}$, $(\sigma_n)_{n \in \IZ}$}\\
&\mbox{are independent, respectively distributed as simple random walk starting}
\\
&\mbox{at $x$, as simple random walk starting at $x$ conditioned never to return}
\\
&\mbox{to $K$, and as a doubly infinite sequence of i.i.d. exponential variables with}
\\
&\mbox{parameter $1$}.
\end{array}
\end{equation}  

\n
As in \cite{Szni11c}, the canonical continuous-time random interlacement point process is then constructed similarly to (1.16) of \cite{Szni10a}, or (2.10) of \cite{Teix09b}, on a space $(\Omega, \cA, \IP)$, with $\o = \sum_{i \ge 0} \delta_{(\wh{w}_i^*,u_i)}$ denoting a generic element of $\Omega$. A central object of interest in this note is the random field of occupation times of random interlacements at level $u \ge 0$:
\begin{equation}\label{1.8}
\begin{split}
L_{x,u}(\o) = & \; \mbox{\f $\dis\frac{1}{\lambda_x}$} \;\dsl_{i \ge 0} \; \dsl_{n \in \IZ} \sigma_n(\wh{w}_i) \,1\{Z_n(\wh{w}_i) = x,u_i \le u\}, \;\mbox{for $x \in E$, $\o \in \Omega$}, 
\\
& \; \mbox{where $\o = \dsl_{i \ge 0} \delta_{(\wh{w}_i^*,u_i)}$ and $\pi^*(\wh{w}_i) = \wh{w}^*_i$, for each $i \ge 0$}.
\end{split}
\end{equation}

\n
The Laplace transform of $(L_{x,u})_{x \in E}$ has been computed in \cite{Szni11c}. More precisely, given a function $f$: $E \r \IR$, such that $\sum_{y \in E} g(x,y) | f(y) | < \infty$, for $x \in E$, one sets
\begin{equation}\label{1.9}
Gf(x) = \dsl_{y \in E} g (x,y) \,f(y), \;\mbox{for $x \in E$}.
\end{equation}

\n
One knows from Theorem \ref{theo2.1} and Remark 2.4 4) of \cite{Szni11c}, that when $V$: $E \r \IR_+$ has finite support and
\begin{equation}\label{1.10}
\sup\limits_{x \in E} GV(x) < 1,
\end{equation}
one has the identity
\begin{equation}\label{1.11}
\IE\Big[\exp\Big\{- \dsl_{x \in E} V(x) \,L_{x,u}\Big\}\Big] = \exp\{ - u \langle V, (I+GV)^{-1} \,1_E \rangle\}, \;\mbox{for $u \ge 0$},
\end{equation}

\n
where the notation $\langle f,g\rangle$ stands for $\sum_{x \in E} f(x) \,g(x)$, when $f,g$ are functions on $E$ such that the previous sum converges absolutely, and $1_E$ denotes the constant function identically equal to $1$ on $E$.

\section{An approximation scheme for random interlacements}
\setcounter{equation}{0}

In this section we develop an approximation scheme for $(L_{x,u})_{x \in E}$ in terms of the fields of local times of certain finite state space Markov chains. The main result is Theorem \ref{theo2.1}, but Remark \ref{rem2.2} states a by-product of the approximation scheme concerning the random interlacement at level $u$. This has a similar flavor to Theorem 4.17 of \cite{Szni11d}, where one gives one of several possible meanings to random interlacements viewed as ``Markovian loops going through infinity'', see also Le~Jan \cite{Leja11}, p.~85.

\medskip
We consider a non-decreasing sequence $U_n, n \ge 1$, of finite connected subsets of $E$, increasing to $E$, as well as $x_*$ some fixed point not belonging to $E$. We introduce the sets $E_n = U_n \cup \{x_*\}$, for $n \ge 1$, and endow $E_n$ with the weights $c^n_{x,y}$, $x,y \in E_n$, obtained by ``collapsing $U^c_n$ on $x_*$'', that is, for any $n \ge 1$, and $x,y \in U_n$, we set
\begin{equation}\label{2.1}
\begin{split}
c^n_{x,y} & = c_{x,y},
\\
c^n_{x_*,y} & = c^n_{y,x_*} = \dsl_{z \in E \backslash U_n} \,c_{z,y},
\end{split}
\end{equation}
and otherwise set $c^n_{x,y} = 0$ (i.e. $c^n_{x_*,x_*} = 0$). We also write
\begin{equation}\label{2.2}
\lambda^n_x = \dsl_{y \in E_n} c^n_{x,y}, \;\mbox{for $x \in E_n$  (in particular $\lambda^n_x = \lambda_x$, when $x \in U_n$)}.
\end{equation}

\n
We tacitly view $U_n$ as a subset of both $E$ and $E_n$. We consider the canonical simple random walk in continuous time on $E_n$, attached to the weights $c^n_{x,y}$, $x,y \in E_n$, with jump rate equal to $1$. We write $X^n_t$, $t \ge 0$, for its canonical process, $P^n_x$ for its canonical law starting from $x \in E_n$, and $E^n_x$ for the corresponding expectation.

\medskip
The local time of this Markov chain is defined by
\begin{equation}\label{2.3}
\ell^{n,x}_t = \mbox{\f $\dis\frac{1}{\lambda^n_x}$} \;\dis\int^t_0 1 \{X^n_s = x\} \,ds, \;\; \mbox{for $x \in E_n$ and $t \ge 0$}.
\end{equation}

\n
The function $t \ge 0 \r \ell_t^{n,x} \ge 0$ is continuous, non-decreasings, starts at $0$, and $P^n_y$-a.s. tends to infinity, as $t$ goes to infinity (the walk on $E_n$ is irreducible and recurrent). By convention, when $x \in E \backslash U_n$, we set $\ell_t^{n,x} = 0$, for all $t \ge 0$. We introduce the right-continuous inverse of $\ell_\point^{n,x_*}$
\begin{equation}\label{2.4}
\tau^n_u = \inf\{ t \ge 0; \ell_t^{n,x_*} > u\}, \; \mbox{for any $u \ge 0$}.
\end{equation}

\n
We are now ready for the main result of this section. We tacitly endow $\IR^E$ with the product topology, and convergence in distribution, as stated below (and in the sequel), corresponds to convergence in law of all finite dimensional marginals.

\begin{theorem}\label{theo2.1} $(u \ge 0)$
\begin{equation}\label{2.5}
\mbox{$(\ell^{n,x}_{\tau^n_u})_{x \in E}$ under $P_{x_*}^n$ converges in distribution to $(L_{x,u})_{x \in E}$ under $\IP$.}
\end{equation}
\end{theorem}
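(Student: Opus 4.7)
The plan is to compute the Laplace transform of $(\ell^{n,x}_{\tau^n_u})_{x\in U_n}$ under $P^n_{x_*}$ in closed form, check that it converges as $n\to\infty$ to the Laplace transform (\ref{1.11}) of $(L_{x,u})_{x\in E}$, and invoke the uniqueness of Laplace transforms of non-negative variables to conclude convergence of all finite-dimensional marginals, which is exactly what (\ref{2.5}) asks for.

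First I would decompose the trajectory of $X^n$ under $P^n_{x_*}$ into excursions from $x_*$. Since holding times at $x_*$ are i.i.d.\ $\mathrm{Exp}(1)$ and each contributes $1/\lambda^n_{x_*}$ to $\ell^{n,x_*}$, the number $N$ of completed excursions by time $\tau^n_u$ is Poisson with parameter $u\lambda^n_{x_*}$, and conditionally on $N$ these excursions are i.i.d., each one starting by a jump from $x_*$ to some $y\in U_n$ with probability $c^n_{x_*,y}/\lambda^n_{x_*}$ and then running until the first return to $x_*$. Given $V\ge 0$ of finite support, set $h^n_V(y)=E^n_y[\exp(-\sum_x V(x)\,\ell^{n,x}_{H_{x_*}})]$ for $y\in E_n$ (so $h^n_V(x_*)=1$). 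A standard Poissonization then yields
\begin{equation*}
E^n_{x_*}\Big[\exp\Big(-\sum_x V(x)\,\ell^{n,x}_{\tau^n_u}\Big)\Big] = \exp\Big\{-u\sum_y c^n_{x_*,y}\,\big(1-h^n_V(y)\big)\Big\}.
\end{equation*}

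Next, a Feynman--Kac argument, combined with the observation that the Green function of $X^n$ killed upon hitting $x_*$ agrees with $g_{U_n}(\cdot,\cdot)$ on $U_n\times U_n$, shows that $(I+g_{U_n}V)h^n_V=1$ on $U_n$, and hence $1-h^n_V(y)=\sum_x g_{U_n}(y,x)V(x)h^n_V(x)$. Substituting this and swapping sums, the exponent above rewrites as $-u\sum_x V(x)h^n_V(x)\sum_y c^n_{x_*,y}g_{U_n}(y,x)$. The identity $\sum_y c^n_{x_*,y}g_{U_n}(y,x)=1$ for every $x\in U_n$---which follows from reversibility (the expected local time at $x$ over one excursion from $x_*$ equals $1/\lambda^n_{x_*}$, since the embedded chain on $E_n$ is reversible with invariant measure $\lambda^n$) or equivalently from a discrete Green's formula applied to the Dirichlet problem defining $g_{U_n}$---then collapses the formula to
\begin{equation*}
E^n_{x_*}\Big[\exp\Big(-\sum_x V(x)\,\ell^{n,x}_{\tau^n_u}\Big)\Big] = \exp\big\{-u\,\langle V,\,(I+g_{U_n}V)^{-1}1_{U_n}\rangle\big\},
\end{equation*}
which is precisely the formula (\ref{1.11}) with $g_{U_n}$ in place of $g$.

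Finally, fix $V\ge 0$ of finite support $K$ with $\sup_x GV(x)<1$. Monotone convergence in (\ref{1.2}), using $T_{U_n}\uparrow\infty$, gives $g_{U_n}(x,y)\uparrow g(x,y)$, so the $K\times K$ matrices $(I+g_{U_n}V)|_K$ converge to $(I+GV)|_K$; both are invertible for all $n$ large enough, by a Neumann series under the hypothesis $\sup GV<1$. Hence $h^n_V\to(I+GV)^{-1}1_E$ on $K$, and the Laplace transforms above converge to the right-hand side of (\ref{1.11}). Testing with $V=\sum_i t_i\,1_{\{x_i\}}$ for arbitrary finite tuples $x_1,\dots,x_m$ and small enough $t_i\ge 0$ delivers convergence of all finite-dimensional marginals, proving (\ref{2.5}). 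The main obstacle is the second step: obtaining the Poisson decomposition of excursions and establishing the identity $\sum_y c^n_{x_*,y}g_{U_n}(y,x)=1$, as these two ingredients together are what turn the finite-$n$ expression into the exact shape of (\ref{1.11}); once they are in place, the passage to the limit and the conclusion are routine.
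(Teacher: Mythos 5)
Your argument is correct, and it offers a clean hybrid of the paper's two proofs while being more self-contained than either. Like the first proof you start from the Poisson excursion decomposition at $x_*$, and like the second proof you aim at matching the explicit Laplace transform (\ref{1.11}); what is genuinely different is that you push the excursion computation all the way to a closed form at finite $n$. Concretely, your Poissonized expression $\exp\{-u\sum_y c^n_{x_*,y}(1-h^n_V(y))\}$ collapses, via the Feynman--Kac relation $(I+G_{U_n}V)h^n_V=1$ on $U_n$ together with the identity $\sum_y c^n_{x_*,y}\,g_{U_n}(y,x)=1$ for $x\in U_n$ (which I verified: by Kac's formula for the recurrent chain on $E_n$ with invariant measure $\lambda^n$, the expected local time $\ell^{n,x}$ over one excursion from $x_*$ equals $1/\lambda^n_{x_*}$, and rescaling by $\lambda^n_{x_*}$ gives exactly the claimed sum), into
\begin{equation*}
E^n_{x_*}\Big[\exp\Big(-\dsl_{x\in E}V(x)\,\ell^{n,x}_{\tau^n_u}\Big)\Big]=\exp\big\{-u\,\langle V,(I+G_{U_n}V)^{-1}1_E\rangle\big\},
\end{equation*}
that is, (\ref{1.11}) with $g_{U_n}$ in place of $g$. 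This makes the convergence manifest once $g_{U_n}\uparrow g$, and you only need the standard continuity theorem for multivariate Laplace transforms to conclude. By contrast, the paper's first proof stops at the exponent $u\,E_{e_{K,U_n}}\big[e^{-\int_0^{T_{U_n}}\frac{V}{\lambda}(X_s)ds}-1\big]$ and matches it to (\ref{1.11}) only in the limit by citing \cite{Szni11c}; the second proof introduces an auxiliary Laplace transform in $u$ (the variable $\lambda$), cites formulas from \cite{Szni11d} and \cite{MarcRose06} to identify $a_n$, and then needs the inversion-in-$u$ argument from \cite{Chun74} before applying the continuity theorem. Your route bypasses all of those citations and the extra inversion step. (Minor notational remark: you write $(I+g_{U_n}V)$ where you mean the operator $(I+G_{U_n}V)$, but the intent is clear.)
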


\begin{proof}
We give two proofs.

\medskip\n
{\it First proof:} We denote by $\cT$ the set of piecewise-constant, right-continuous, $E \cup \{x_*\}$-valued trajectories, which at a finite time reach $x_*$, and from that time onwards remain equal to $x_*$. We endow $\cT$ with its canonical $\sigma$-algebra.

\medskip
Under $P^n_{x_*}$, one has almost surely two infinite sequences $R_\ell, \ell \ge 1$ and $D_\ell, \ell \ge 1$,
\begin{equation}\label{2.6}
R_1 = 0 < D_1 <  R_2 < \dots < R_\ell < D_\ell < \dots
\end{equation}
of successive returns $R_\ell$ of $X_\point^n$ to $x_*$, and departures $D_\ell$ from $x_*$,  which tend to infinity. One introduces the random point measure on $\cT$
\begin{equation}\label{2.7}
\Gamma^n_u = \dsl_{\ell \ge 1} 1\{D_\ell < \tau^n_u\} \,\delta_{(X^n_{D_\ell + \cdot})_{0 \le \cdot \le R_{\ell + 1} - D_\ell}}, \; u \ge 0,
\end{equation}

\n
which collects the successive excursions of $X_\point^n$ (out of $x_*$ until first return to $x_*$) that start before $\tau^n_u$. By classical Markov chain excursion theory we know that
\begin{equation}\label{2.8}
\begin{array}{l}
\mbox{$\Gamma^n_u$ is a Poisson point measure on $\cT$ with intensity measure}
\\
\gamma^n_u(\cdot) = u \,P_{\kappa_n}^n [(X_{s \wedge T_{U_n}}^n)_{s \ge 0} \in \cdot] \;\mbox{on $\cT$},
\end{array}
\end{equation}
where $T_{U_n}$ stands for the exit time of $X_\point^n$ from $U_n$ and $\kappa_n$ for the measure on $U_n$
\begin{equation}\label{2.9}
\kappa_n(y) = \lambda^n_{x_*} \;\dis\frac{c^n_{x_*,y}}{\lambda^n_{x_*}} = c^n_{x_*,y} \stackrel{(\ref{2.1})}{=} \dsl_{x \in E \backslash U_n} c_{x,y}, \;\mbox{for $y \in U_n$}.
\end{equation}

\n
When starting in $U_n$,the Markov chains $X$ on $E$, and $X^n$ on $E_n$, have the same evolution strictly before the exit time of $U_n$. Denoting by $(X_\point)_{0 \le \cdot < T_{U_n}}$ the random element of $\cT$, which equals $X_s$, for $0 \le s < T_{U_n}$, and $x_*$ for $s \ge T_{U_n}$, we see that
\begin{equation}\label{2.10}
\gamma^n_u(\cdot) = u \,P_{\kappa_n} [(X_\point)_{0 \le \cdot < T_{U_n}}  \in \cdot ], \; \mbox{for all $n \ge 1$, $u \ge 0$}.
\end{equation}

\n
Let $K$ be a finite subset of $E$, and assume $n$ large enough so that $K \subseteq U_n$. We introduce the point measure on $\cT$ obtained by selecting the excursions in the support of $\Gamma^n_u$ that enter $K$, and only keeping track of their trajectory after they enter $K$, that is
\begin{equation}\label{2.11}
\mu^n_{K,u} = \theta_{H_K} \circ (1 \{H_K < \infty\} \, \Gamma_u^n),
\end{equation}

\medskip\n
where $\theta_t$, $t \ge 0$, stands for the canonical shift on $\cT$, and we use similar notation on $\cT$ as below (\ref{1.1}). By (\ref{2.8}), (\ref{2.10}) it follows that 
\begin{equation}\label{2.12}
\begin{array}{l}
\mbox{$\mu^n_{K,u}$ is a Poisson point measure on $\cT$ with intensity measure}\\
\gamma^n_{K,u}(\cdot) = u \,P_{\rho^n_K} [(X_\point)_{0 \le \cdot < T_{U_n}} \in \cdot]  \; \mbox{on} \; \cT,
\end{array}
\end{equation}

\smallskip\n
where $\rho_K^n$ is the measure supported by $K$ such that 
\begin{equation}\label{2.13}
\rho^n_K(x) = P_{\kappa_n} [H_K < T_{U_n}, X_{H_K} = x] = e_{K,U_n}(x), \;\mbox{for $x \in K$},
\end{equation}

\n
where the last equality follows from (1.60) in Proposition 1.8 of \cite{Szni11d}. Note that $e_{K,U_n}$ and $e_K$ are concentrated on $K$, and for $x \in K$, 
\begin{equation}\label{2.14}
e_{K,U_n}(x) \stackrel{(\ref{1.4})}{=} P_x [\wt{H}_K > T_{U_n}] \,\lambda_x \underset{n \r \infty}{\longrightarrow} P_x [\wt{H}_K = \infty] \,\lambda_x = e_K(x).
\end{equation}

\n
Consider $V$: $E \r \IR_+$ supported in $K$, and $\Phi$: $\cT \r \IR_+$, the map 
\begin{equation*}
\Phi(w) = \dsl_{x \in E} V(x) \;\mbox{\f $\dis\frac{1}{\lambda_x}$} \;\dis\int^\infty_0 1\{w(s) = x\} ds, \; 
\mbox{for $w \in \cT$}.
\end{equation*}

\n
The measure $\mu^n_{K,u}$ contains in its support the pieces of the trajectory $X^n_\point$ up to time $\tau^n_u$, where $X^n_\point$ visits $K$, see (\ref{2.11}), and we have
\begin{equation}\label{2.15}
\begin{array}{l}
E^n_{x_*} \Big[\exp\Big\{ - \dsl_{x \in E} V(x) \,\ell^{n,x}_{\tau^n_u}\Big\}\Big] = E^n_{x_*} \Big[\exp\Big\{ - \langle \mu^n_{K,u}, \Phi \rangle \Big\}\Big] \stackrel{(\ref{2.12})}{=}
\\
\exp\Big\{\dis\int_\cT (e^{-\Phi} -1) \,d \gamma^n_{K,u}\Big\} \stackrel{(\ref{2.12}), (\ref{2.13})}{=} \exp\Big\{u \,E_{e_{K,U_n}} \Big[e^{-\int_0^{T_{U_n}} \frac{V}{\lambda} (X_s) ds}-1\Big]\Big\}
\\
\underset{n \r \infty}{\longrightarrow} \exp\Big\{ u\,E_{e_K} \big[ e^{-\int^\infty_0 \frac{V}{\lambda} (X_s) ds} -1\big]\Big\} = \IE\Big[\exp\Big\{- \dsl_{x \in E} V(x)\,L_{x,u}\Big\}\Big],
\end{array}
\end{equation}

\n
where we used (\ref{2.14}) and the fact that $T_{U_n} \uparrow \infty$, $P_x$-a.s., for $x$ in $E$, for the limit in the last line, and a similar calculation as in (2.5) of \cite{Szni11c} for the last equality. Since $K$ and the function $V$: $E \r \IR_+$, supported in $K$, are arbitrary, the claim (\ref{2.5}) follows.

\bigskip\n
{\it Second Proof:} We will now make direct use of (\ref{1.11}). The argument is more computational, but also of interest. We consider $K$ and $V$ as above, as well as a positive number $\lambda$. We assume $n$ large enough so that $K \subseteq U_n$. We further make a smallness assumption on the non-negative function $V$ (supported in $K$):
\begin{equation}\label{2.16}
\sup\limits_{x \in E}\, (GV) (x) + \lambda^{-1} \dsl_{x \in K} V(x) < 1.
\end{equation}

\n
We define the operator $G_n$ on $\IR^{E_n}$ attached to the kernel $g_n(\cdot,\cdot)$ in a similar fashion to (\ref{1.9}), where we use the notation
\begin{equation}\label{2.17}
g_n(x,y) = g_{U_n} (x,y) + \lambda^{-1}, \;\mbox{for $x,y \in E_n$},
\end{equation}

\n
and we have set $g_{U_n}(x_*,\cdot) = g_{U_n}(\cdot,x_*) = 0$, by convention, to define $g_{U_n}(\cdot,\cdot)$ on $E_n \times E_n$.

\medskip
Since $g_{U_n}(\cdot,\cdot) \le g(\cdot,\cdot)$ on $E \times E$, it follows from (\ref{2.16}) that $\sup_{x \in E_n} (G_nV)(x) < 1$, where we have set $V(x_*) = 0$, by convention, so that the operator $I + G_n V$ is invertible.

\medskip
We introduce the positive number
\begin{equation}\label{2.18}
a_n = \dis\int^\infty_0 \lambda e^{-\lambda u} E^n_{x_*} \Big[e^{-\sum\limits_{x \in E} V(x) \ell^{n,x}_{\tau^n_u}}\Big]\,du,
\end{equation}

\n
where we recall that $\ell^{n,x}_t = 0 $, when $x \in E \backslash U_n$.  Using (2.93), (2.41), (2.71) of \cite{Szni11d}, or by (8.44) and Remark 3.10.3 of Marcus-Rosen \cite{MarcRose06}, we know that 
\begin{equation}\label{2.19}
a_n = ( I + G_n V)^{-1} 1_{E_n}(x_*).
\end{equation}

\n
We then define the function $h_n$ on $E_n$ and the real number $b_n$:
\begin{equation}\label{2.20}
h_n = (I+G_n  V)^{-1} \,1_{E_n} \;\mbox{and} \;b_n = \dsl_{x \in K} V(x) \,h_n(x).
\end{equation}

\n
We let $G^*_{U_n}$ be the operator on $\IR^{E_n}$ attached to the kernel $g_{U_n}(\cdot,\cdot)$ (on $E_n \times E_n$), in a similar fashion to (\ref{1.9}). By (\ref{2.17}) and (\ref{2.20}), we have
\begin{equation}\label{2.21}
h_n + G^*_{U_n} V h_n + \lambda^{-1} b_n \,1_{E_n} = 1_{E_n}, \;\mbox{so that}
\end{equation}
\begin{equation*}
h_n = \Big(1 - \mbox{\f $\dis\frac{b_n}{\lambda}$}\Big) (1 + G^*_{U_n} V)^{-1} 1_{E_n},
\end{equation*} 

\smallskip\n
noting that the above inverse is well defined by the same argument used below (\ref{2.17}). By the second equality in (\ref{2.20}) it follows  that
\begin{equation}\label{2.22}
b_n = \Big(1 - \mbox{\f $\dis\frac{b_n}{\lambda}$}\Big) \, \dsl_{x \in K} V(x) (I + G^*_{U_n} V)^{-1}(x) = \big(1 - \mbox{\f $\dis\frac{b_n}{\lambda}$}\Big) \langle V, (I+G_{U_n} V)^{-1} 1_E \rangle,
\end{equation}

\n
where we refer to below (\ref{1.11}) for notation, $G_{U_n}$ is the operator on $\IR^E$ attached to the kernel $g_{U_n}(\cdot,\cdot)$ on $E \times E$, and the last equality follows by writing the Neumann series for $(I+G^*_{U_n} V)^{-1}$ and $(I+G_{U_n} V)^{-1}$.

\medskip
We can now solve for $b_n$. Noting that $a_n = h_n(x_*) = 1 - \frac{b_n}{\lambda}$, by (\ref{2.21}), we find
\begin{equation}\label{2.23}
a_n = (1 + \lambda^{-1} \langle V, (I+G_{U_n} V)^{-1} 1_E \rangle)^{-1}.
\end{equation}

\n
Using the Neumann series for $(I + G_{U_n}V)^{-1}$, and applying dominated convergence together with the fact that $g_{U_n} (\cdot,\cdot) \uparrow g(\cdot,\cdot)$ on $E \times E$, we see that
\begin{equation}\label{2.24}
a_n \underset{n \r \infty}{\longrightarrow} (1 + \lambda^{-1} \langle V, (I+GV)^{-1} 1_E \rangle)^{-1}.
\end{equation}

\n
Taking the identity (\ref{1.11}) into account, we have shown that under (\ref{2.16}),
\begin{equation}\label{2.25}
\lim\limits_n \dis\int^\infty_0 \lambda e^{-\lambda u} E^n_{x_*} \big[e^{-\sum\limits_{x \in E} V(x) \ell^{n,x}_{\tau^n_u}}\big] \,du = \dis\int^\infty_0 \lambda e^{-\lambda u} \IE\big[e^{-\sum\limits_{x \in E} V(x) L_{x,u}}\big]\,du.
\end{equation}

\medskip\n
Note that when $V$: $E \r \IR_+$ is supported in $K$ and $\sup_{x \in E} GV(x) < 1$, then (\ref{2.16}) holds for $\lambda$ large (depending on $V$). The expectation under the integral in the left-hand side of (\ref{2.25}) is non-increasing in $u$, whereas the expectation under the integral in the right-hand side of (\ref{2.25}) is continuous in $u$ by (\ref{1.11}). It then follows from \cite{Chun74}, p.~193-194, that for $V$ as above,
\begin{equation}\label{2.26}
\lim\limits_n E^n_{x_*} \big[e^{-\sum\limits_{x \in E} V(x) \ell^{n,x}_{\tau^n_u}}\big] = \IE \big[ e^{-\sum\limits_{x \in E} V(x) L_{x,u}}\big], \;\mbox{for $u \ge 0$}.
\end{equation}

\n
This readily implies the tightness of the laws of $(\ell^{n,x}_{\tau^n_u})_{x\in K}$ under $P^n_{x_*}$, and uniquely determines the Laplace transform of their possible limit points, see Theorem 6.6.5 of \cite{Chun74}. Letting $K$ vary, the claim (\ref{2.5}) follows.
\end{proof}

\begin{remark}\label{rem2.2}  \rm
The approximation scheme introduced in this section can also be used to approximate the random interlacement at level $u$, as we now explain. We let $\cI^n_u$ stand for the trace left on $U_n$ by the walk on $E_n$ up to time $\tau^n_u$:
\begin{equation}\label{2.27}
\cI^n_u = \{x \in U_n; \, \ell^{n,x}_{\tau^n_u} > 0\}.
\end{equation}

\n
By (\ref{2.12}), (\ref{2.14}), it follows that for any finite subset $K$ of $E$ and $u \ge 0$,
\begin{equation}\label{2.287}
P^n_{x_*} [\cI^n_u \cap K = \phi] = P^n_{x_*} [\mu^n_{K,u} = 0] = e^{-u\, {\rm cap}_{U_n}(K)} \underset{n}{\stackrel{(\ref{1.4}),(\ref{1.5})}{\longrightarrow}} e^{-u\, {\rm cap}(K)} = \IP [\cI^u \cap K = \phi],
\end{equation}

\n
where $\cI^u$ stands for the random interlacement at level $u$, that is, the trace on $E$ of doubly infinite trajectories modulo time-shift in the Poisson cloud $\omega$ with label at most $u$. By an inclusion-exclusion argument, see for instance Remark 4.15 of \cite{Szni11d} or Remark 2.2 of \cite{Szni10a}, it follows that, as $n \r \infty$,
\begin{equation}\label{2.29}
\mbox{$\cI^n_u$ under $P^n_{x_*}$, converges in distribution to $\cI^u$ under $\IP$, for any $u \ge 0$,}
\end{equation}

\medskip\n
where the above distributions are viewed as laws on  $\{0,1\}^E$ endowed with the product topology. \hfill $\square$

\end{remark}

\section{Proof of the isomorphism theorem}
\setcounter{equation}{0}

In this short section we combine Theorem \ref{theo2.1} and the generalized second Ray-Knight theorem of \cite{EisKasMarRosShi00} to prove Theorem \ref{theo0.1}. We also state a variation of (\ref{0.4}) in Remark \ref{rem3.1}.

\bigskip\n
{\it Proof of Theorem \ref{theo0.1}:} For $U \subseteq G$ we denote by $P^{G,U}$ the law on $\IR^E$ of the centered Gaussian field with covariance $E^{G,U} [\varphi_x \varphi_y] = g_U(x,y)$, $x,y \in E$ (in particular $\varphi_x = 0$, $P^{G,U}$-a.s., when $x \in E \backslash U$). It follows from the generalized second Ray-Knight theorem, see Theorem 8.2.2 of \cite{MarcRose06}, or Theorem 2.17 of \cite{Szni11d}, that for $n \ge 1$, $u \ge 0$, in the notation of Section 2,
\begin{equation}\label{3.1}
\begin{array}{l}
\big(\ell^{n,x}_{\tau^n_{u}} + \fr \;\varphi_x^2\big)_{x \in U_n} \;\; \mbox{under $P^n_{x_*} \otimes P^{G,U_n}$, has the same law as}
\\[1ex]
\big(\fr \; (\varphi_x + \sqrt{2u})^2\big)_{x \in U_n} \;\; \mbox{under $P^{G,U_n}$}.
\end{array}
\end{equation}

\n
Since $g_{U_n}(\cdot,\cdot) \uparrow g_U(\cdot,\cdot)$, we see that $P^{G,U_n}$ converges weakly to $P^G$ (looking for instance at characteristic functions of finite dimensional marginals). Taking Theorem \ref{theo2.1} into account we thus see letting $n$ tend to infinity that
\begin{equation}\label{3.2}
\begin{array}{l}
\big(L_{x,u} + \fr \;\varphi_x^2\big)_{x \in E} \;\; \mbox{under $P \otimes P^G$, has the same law as}
\\[1ex]
\big(\fr \; (\varphi_x + \sqrt{2u})^2\big)_{x \in E} \;\; \mbox{under $P^G$},
\end{array}
\end{equation}
and Theorem \ref{theo0.1} is proved. \hfill $\square$

\begin{remark}\label{rem3.1} \rm Let us mention a variation on (\ref{0.4}) of Theorem \ref{theo0.1}. By Theorem 1.1 of \cite{EisKasMarRosShi00}, one knows that for $u \ge 0$, $a \in \IR$, $n \ge 1$,
\begin{equation}\label{3.3}
\begin{array}{l}
\big(\ell^{n,x}_{\tau^n_u} + \fr \;(\varphi_x + a)^2\big)_{x \in U_n} \;\;\mbox{under $P^n_{x_*} \otimes P^{G,U_n}$, has the same law as}
\\[1ex]
\big(\fr \;\big(\varphi_x + \sqrt{2u + a^2}\big)^2\big)_{x \in U_n} \;\;\mbox{under} \;P^{G,U_n} .
\end{array}
\end{equation}

\n
Letting $n$ tend to infinity, the same argument as above shows that for $u \ge 0$, and $a \in \IR$,
\begin{equation}\label{3.4}
\hspace{-3ex} \begin{array}{l}
\big(L_{x,u} + \fr \;(\varphi_x + a)^2\big)_{x \in E} \;\;\mbox{under $\IP \otimes P^G$, has the same law as}
\\[1ex]
\big(\fr \;\big(\varphi_x + \sqrt{2u + a^2}\big)^2\big)_{x \in E} \;\; \mbox{under} \;P^G .
\end{array}
\end{equation}
\hfill $\square$
\end{remark}

\section{An application}
\setcounter{equation}{0}

We illustrate the use of Theorem \ref{theo0.1} and show how one can study the large $u$ asymptotics of $(L_{x,u})_{x \in E}$ and in particular recover Theorem 5.1 of \cite{Szni11c}, see also Remark 5.2 of \cite{Szni11c}. We denote by $x_0$ some fixed point of $E$. 

\begin{theorem}\label{theo4.1} As $u \r \infty$,
\begin{align}
&\mbox{$\Big(\mbox{\f $\dis\frac{1}{u}$} \;L_{x,u}\Big)_{x \in E}$ converges in distribution to the constant field equal to $1$}, \label{4.1}
\\[1ex]
&\Big(\mbox{\f $\dis\frac{L_{x,u} - u}{\sqrt{2u}}$}\Big)_{x \in E} \;\mbox{converges in distribution to $(\varphi_x)_{x \in E}$ under $P^G$.} \label{4.2a}
\intertext{In particular, as $u \r \infty$,}
&\Big(\mbox{\f $\dis\frac{L_{x,u} - L_{x_0,u}}{\sqrt{2u}}$}\Big)_{x \in E} 
\;\mbox{converges in distribution to $(\varphi_x - \varphi_{x_0})_{x \in E}$ under $P^G$.} \label{4.2}
\end{align}
\end{theorem}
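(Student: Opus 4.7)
The plan is to read off all three assertions directly from the isomorphism identity (\ref{0.4}) of Theorem \ref{theo0.1}. On an enlarged probability space carrying $L_{\cdot,u}$ together with an independent GFF $\wt{\varphi}$ (under $\IP \otimes P^G$), the identity (\ref{0.4}), together with the expansion $\frac{1}{2}(\varphi_x + \sqrt{2u})^2 = u + \sqrt{2u}\,\varphi_x + \frac{1}{2}\varphi_x^2$, gives after subtracting $u$ and dividing by $\sqrt{2u}$ the equality in law of $\IR^E$-valued fields
$$\left(\frac{L_{x,u} - u}{\sqrt{2u}} + \frac{\wt{\varphi}_x^2}{2\sqrt{2u}}\right)_{x\in E} \;\stackrel{d}{=}\; \left(\varphi_x + \frac{\varphi_x^2}{2\sqrt{2u}}\right)_{x\in E}.$$

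For (\ref{4.2a}), as $u \to \infty$ the correction $\varphi_x^2/(2\sqrt{2u})$ on the right tends to $0$ in $P^G$-probability for every $x$, hence for every finite-dimensional marginal, so that the right-hand side converges in distribution to $(\varphi_x)_{x \in E}$ with law $P^G$; by the equality in law above, so does the left-hand side. A further application of Slutsky's theorem, used to peel off the vanishing-in-probability term $\wt{\varphi}_x^2/(2\sqrt{2u})$, which is independent of $L_{\cdot,u}$, then yields that $\bigl((L_{x,u} - u)/\sqrt{2u}\bigr)_{x \in E}$ itself converges in distribution to $(\varphi_x)_{x \in E}$ under $P^G$. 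Statement (\ref{4.1}) will follow at once by writing $L_{x,u}/u = 1 + \sqrt{2/u}\cdot(L_{x,u}-u)/\sqrt{2u}$: the second factor is tight by (\ref{4.2a}), while $\sqrt{2/u} \to 0$, so the product tends to $0$ in probability and the quotient to the constant $1$. Statement (\ref{4.2}) is then obtained from (\ref{4.2a}) by applying the continuous mapping theorem to the map $\xi \mapsto (\xi_x - \xi_{x_0})_{x\in E}$, which is continuous on $\IR^E$ endowed with the product topology.

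I do not anticipate any serious obstacle. The isomorphism of Theorem \ref{theo0.1} already encapsulates the entire large-$u$ behaviour of the occupation field: after subtraction of the leading bias $u$ and rescaling by $\sqrt{2u}$, the dominant term $\sqrt{2u}\,\varphi_x$ inside $\frac{1}{2}(\varphi_x+\sqrt{2u})^2$ becomes exactly a standard copy of the GFF, while the quadratic corrections are of order $u^{-1/2}$. The only minor point to dispatch is the Slutsky-type step used to get rid of the independent term $\wt{\varphi}_x^2/(2\sqrt{2u})$, but this is entirely routine on $\IR^E$ with the product topology, since convergence in distribution reduces there to convergence of finite-dimensional marginals.
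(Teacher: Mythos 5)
Your proof is correct and follows essentially the same route as the paper: rearrange the isomorphism~(\ref{0.4}) by expanding $\frac{1}{2}(\varphi_x+\sqrt{2u})^2$, divide by $\sqrt{2u}$, observe that the quadratic corrections $\varphi_x^2/(2\sqrt{2u})$ vanish almost surely, and then use independence of the GFF factor from $L_{\cdot,u}$ (via characteristic functions of finite-dimensional marginals, which is exactly your ``Slutsky-type'' peeling-off step) to extract~(\ref{4.2a}); (\ref{4.2}) then follows by continuity of the difference map. The only cosmetic deviation is that you deduce~(\ref{4.1}) from~(\ref{4.2a}) via the identity $L_{x,u}/u = 1 + \sqrt{2/u}\,(L_{x,u}-u)/\sqrt{2u}$ and tightness, whereas the paper obtains~(\ref{4.1}) directly from~(\ref{0.4}) by normalizing with $1/(2u)$; both are immediate.
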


\begin{proof}
We first prove (\ref{4.1}). To this end we note that $P^G$-a.s., for $x \in E$,
\begin{equation}\label{4.3}
\mbox{$\mbox{\f $\dis\frac{1}{2u}$} \;\varphi^2_x \r 0$ and $\mbox{\f $\dis\frac{1}{2u}$} \;(\varphi_x + \sqrt{2u})^2 \r 1$, as $u \r \infty$.}
\end{equation}

\n
Thus Theorem \ref{theo0.1} implies that $\frac{1}{u} \; L_{x,u}$ converges in distribution to the constant $1$ as $u$ tends to infinity, and (\ref{4.1}) follows.

\medskip
We then observe that (\ref{4.2}) is a direct consequence of (\ref{4.2a}), and turn to the proof of (\ref{4.2}). Note that by Theorem \ref{theo0.1}
\begin{equation}\label{4.4}
\begin{array}{l}
\Big(\mbox{\f $\dis\frac{L_{x,u} - u}{\sqrt{2u}}$} + \mbox{\f $\dis\frac{1}{2 \sqrt{2u}}$} \;\varphi^2_x\Big)_{x \in E} \;\mbox{under $\IP \otimes P^G$, has the same law as}
\\[2ex]
\Big(\mbox{\f $\dis\frac{1}{2 \sqrt{2u}}$}\;[(\varphi_x + \sqrt{2u})^2 -2u]\Big)_{x \in E}.
\end{array}
\end{equation}

\n
Note also that for each $x \in E$, $P^G$-a.s., as $u \r \infty$,
\begin{align}
&\mbox{\f $\dis\frac{1}{2 \sqrt{2u}}$}\; \varphi^2_x  \r 0, \;\mbox{and} \label{4.5}
\\[1ex]
&\mbox{\f $\dis\frac{1}{2 \sqrt{2u}}$}\; [(\varphi_x + \sqrt{2u})^2 -2u] = \mbox{\f $\dis\frac{1}{2 \sqrt{2u}}$}\;  \varphi^2_x  + \varphi_x  \r \varphi_x . \label{4.6}
\end{align}

\n
Looking at the characteristic function of finite dimensional marginals of the fields in the first and second line of (\ref{4.4}), we readily obtain (\ref{4.2}).
\end{proof}

\begin{remark}\label{rem4.2} \rm 
In view of the above illustration of the use of Theorem \ref{theo0.1}, one can naturally wonder about the nature of its scope as a transfer mechanism between random interlacements and the Gaussian free field. \hfill $\square$
\end{remark}

\end{document}